\theoremstyle{plain}
    \newtheorem{thm}{Theorem}[section]
    \newtheorem{theorem}[thm]{Theorem}
\theoremstyle{definition}
\theoremstyle{remark}
\newcommand{\authorfootnotes}{\renewcommand\thefootnote{\@fnsymbol\c@footnote}}
 \title[New Q-Newton's method Backtracking: Generalisations and improvements]{Generalisations and improvements of New Q-Newton's method Backtracking}
 \author{Tuyen Trung Truong}
   \address{Department of Mathematics, University of Oslo, Blindern 0851 Oslo, Norway}
  \email{tuyentt@math.uio.no}
    \date{\today}
    \keywords{Backtracking line search, Convergence guarantee, Newton's method, Rate of convergence, Saddle points}
   \subjclass[2010]{}
\begin{document}
\maketitle

\begin{abstract}

In this paper, we propose a general framework for the algorithm New Q-Newton's method Backtracking, developed in the author's previous work. For a symmetric, square real matrix $A$, we define $minsp(A):=\min _{||e||=1} ||Ae||$. Given a $C^2$ cost function $f:\mathbb{R}^m\rightarrow \mathbb{R}$ and a  real number $0<\tau $, as well as  $m+1$ fixed real numbers $\delta _0,\ldots ,\delta _m$, we define for each $x\in \mathbb{R}^m$ with $\nabla f(x)\not= 0$ the following quantities: 

$\kappa :=\min _{i\not= j}|\delta _i-\delta _j|$;

$A(x):=\nabla ^2f(x)+\delta ||\nabla f(x)||^{\tau}Id$, where $\delta$ is the first element in the sequence $\{\delta _0,\ldots ,\delta _m\}$ for which $minsp(A(x))\geq \kappa ||\nabla f(x)||^{\tau}$;

$e_1(x),\ldots ,e_m(x)$  are an orthonormal basis of $\mathbb{R}^m$, chosen appropriately;

$w(x)=$ the step direction, given by the formula: 
$$w(x)=\sum _{i=1}^m\frac{<\nabla f(x),e_i(x)>}{||A(x)e_i(x)||}e_i(x);$$
(we can also normalise by $w(x)/\max \{1,||w(x)||\}$ when needed)

$\gamma (x)>0$ learning rate chosen by Backtracking line search so that Armijo's condition is satisfied: 
$$f(x-\gamma (x)w(x))-f(x)\leq -\frac{1}{3}\gamma (x)<\nabla f(x),w(x)>.$$

The update rule for our algorithm is $x\mapsto H(x)=x-\gamma (x)w(x)$. 

In New Q-Newton's method Backtracking, the choices are $\tau =1+\alpha >1$ and $e_1(x),\ldots ,e_m(x)$'s are eigenvectors of $\nabla ^2f(x)$. In this paper, we allow more flexibility and generality, for example $\tau$ can be chosen to be $<1$ or $e_1(x),\ldots ,e_m(x)$'s are not necessarily eigenvectors of $\nabla ^2f(x)$. 

New Q-Newton's method Backtracking (as well as Backtracking gradient descent) is a special case, and some versions have flavours of quasi-Newton's methods. Several versions allow good theoretical guarantees. An application to solving systems of polynomial equations is given.

\end{abstract}

\section{Introduction} Let $f:\mathbb{R}^m\rightarrow \mathbb{R}$ be a $C^2$ function, with gradient $\nabla f(x)$ and Hessian $\nabla ^2f(x)$.   Newton's method $x_{n+1}=x_n-(\nabla ^2f(x_n))^{-1}\nabla f(x_n)$ (if the Hessian is {\bf invertible}) is a very popular iterative optimization method. It seems that every month there is at least one paper about this subject appears on arXiv. One attractive feature of this method is that if it {\bf converges} then it usually converges very fast, with the rate of convergence being quadratic, which is generally faster than that of Gradient descent (GD) methods.  We recall that if $\{x_n\}\subset \mathbb{R}^m$  converges to $x_{\infty}$, and so that $||x_{n+1}-x_{\infty}||=O(||x_n-x_{\infty}||^{\epsilon})$, then $\epsilon$ is the rate of convergence of the given sequence. If $\epsilon =1$ then we have linear rate of convergence, while if $\epsilon=2$ then we have quadratic rate of convergence.  

However, there is no guarantee that Newton's method will converge, and it is problematic near points where the Hessian is not invertible. Moreover, it cannot avoid saddle points. Recall that a {\bf saddle point} is a point $x^*$ which is a non-degenerate critical point of $f$ (that is $\nabla f(x^*)=0$ and $\nabla ^2f(x^*)$ is invertible) so that the Hessian has at least one {\bf negative eigenvalue}. Saddle points are problematic in large scale optimization (such as those appearing in Deep Neural Networks, for which the dimensions could easily be millions or billions), see \cite{bray-dean, dauphin-pascanu-gulcehre-cho-ganguli-bengjo}. 

There are many modifications of Newton's method. The intended readers can see an overview in \cite{truong-etal}. This paper concerns only the versions recently developed in

The author's joint paper \cite{truong-etal} defined a new modification of Newton's method which is easy to implement, while can avoid saddle points and as fast as Newton's method. It is recalled in Algorithm \ref{table:alg}. Here we explain notations used in the description. Let $A:\mathbb{R}^m\rightarrow \mathbb{R}^m$ be an invertible {\bf symmetric} square matrix. In particular, it is diagonalisable.  Let $V^{+}$ be the vector space generated by eigenvectors of positive eigenvalues of $A$, and $V^{-}$ the vector space generated by eigenvectors of negative eigenvalues of $A$. Then $pr_{A,+}$ is the orthogonal projection from $\mathbb{R}^m$ to $V^+$, and  $pr_{A,-}$ is the orthogonal projection from $\mathbb{R}^m$ to $V^-$. As usual, $Id$ means the $m\times m$ identity matrix.

\medskip
{\color{blue}
 \begin{algorithm}[H]
\SetAlgoLined
\KwResult{Find a critical point of $f:\mathbb{R}^m\rightarrow \mathbb{R}$}
Given: $\{\delta_0,\delta_1,\ldots, \delta_{m}\}\subset \mathbb{R}$\ (chosen {\bf randomly}) and $\alpha >0$;\\
Initialization: $x_0\in \mathbb{R}^m$\;
 \For{$k=0,1,2\ldots$}{ 
    $j=0$\\
    \If{$\|\nabla f(x_k)\|\neq 0$}{
   \While{$\det(\nabla^2f(x_k)+\delta_j \|\nabla f(x_k)\|^{1+\alpha}Id)=0$}{$j=j+1$}}

$A_k:=\nabla^2f(x_k)+\delta_j \|\nabla f(x_k)\|^{1+\alpha}Id$\\
$v_k:=A_k^{-1}\nabla f(x_k)=pr_{A_k,+}(v_k)+pr_{A_k,-}(v_k)$\\
$w_k:=pr_{A_k,+}(v_k)-pr_{A_k,-}(v_k)$\\
$x_{k+1}:=x_k-w_k$
   }
  \caption{New Q-Newton's method} \label{table:alg}
\end{algorithm}
}
\medskip
 
 Experimentally, on small scale problems, New Q-Newton's method works quite competitive against the methods mentioned above, see \cite{truong-etal}.  However, while it can avoid saddle points, it does not have a descent property, and an open question in \cite{truong-etal} is whether it has good convergence guarantees. More recently, in \cite{truong2021}, the author defined  New Q-Newton's method Backtracking, see Algorithm  \ref{table:alg0} below, which resolves this convergence guarantee issue. It incorporates Backtracking line search into New Q-Newton's method. Its use of hyperparameters is more sophisticated than that of New Q-Newton's method, and we need some notations.  For a symmetric, square real matrix $A$, we define: 
  
  $sp(A)=$ the maximum among $|\lambda |$'s, where $\lambda  $ runs in the set of eigenvalues of $A$, this is usually called the spectral radius in the Linear Algebra literature;
  
  and 
  
  $minsp(A)=$ the minimum among $|\lambda |$'s, where $\lambda  $ runs in the set of eigenvalues of $A$, this number is non-zero precisely when $A$ is invertible.
  
 One can easily check the following more familiar formulas: $sp(A)=\max _{||e||=1}||Ae||$ and $minsp(A)=\min _{||e||=1}||Ae||$, using for example the fact that $A$ is diagonalisable.  

\medskip
{\color{blue}
 \begin{algorithm}[H]
\SetAlgoLined
\KwResult{Find a critical point of $f:\mathbb{R}^m\rightarrow \mathbb{R}$}
Given: $\{\delta_0,\delta_1,\ldots, \delta_{m}\} \subset \mathbb{R}$\ (chosen {\bf randomly}), $0<\alpha $ and $0<\gamma _0<1$;\\
Initialization: $x_0\in \mathbb{R}^m$\;
$\kappa:=\frac{1}{2}\min _{i\not=j}|\delta _i-\delta _j|$;\\
 \For{$k=0,1,2\ldots$}{ 
    $j=0$\\
  \If{$\|\nabla f(x_k)\|\neq 0$}{
   \While{$minsp(\nabla^2f(x_k)+\delta_j \|\nabla f(x_k)\|^{1+\alpha}Id)<\kappa  \|\nabla f(x_k)\|^{1+\alpha}$}{$j=j+1$}}
  
 $A_k:=\nabla^2f(x_k)+\delta_j \|\nabla f(x_k)\|^{1+\alpha}Id$\\
$v_k:=A_k^{-1}\nabla f(x_k)=pr_{A_k,+}(v_k)+pr_{A_k,-}(v_k)$\\
$w_k:=pr_{A_k,+}(v_k)-pr_{A_k,-}(v_k)$\\
$\widehat{w_k}:=w_k/\max \{1,||w_k||\}$\\
$\gamma :=\gamma _0$\\
 \If{$\|\nabla f(x_k)\|\neq 0$}{
   \While{$f(x_k-\gamma \widehat{w_k})-f(x_k)>-\gamma <\widehat{w_k},\nabla f(x_k)>/3$}{$\gamma =\gamma /3$}}

$x_{k+1}:=x_k-\gamma \widehat{w_k}$
   }
  \caption{New Q-Newton's method Backtracking} \label{table:alg0}
\end{algorithm}
}
\medskip
 
There are two main differences between New Q-Newton's method  and New Q-Newton's method Backtracking. First, in the former we only need $\det(A_k)\not= 0$, while in the latter we need the eigenvalues of $A_k$ to be  "sufficiently large". Second, the former has no line search component, while the latter has. Note that the $\widehat{w_k}$ in Algorithm \ref{table:alg0} satisfies $||\widehat{w_k}||\leq 1$, and if $||\widehat{w_k}||<1$ then $\widehat{w_k}=w_k$. On the other hand, if one wants to keep closer to New Q-Newton's method, one can apply line search directly to $w_k$, and obtain a variant which will be named New Q-Newton's method Backtracking S, see Section \ref{Section2} for its description and theoretical guarantees. 
 
In \cite{truong2021}, there are also some other versions of New Q-Newton's method Backtracking, both theoretically and practically. For simplicity, here we concerns only the above version. It is shown in \cite{truong2021} that any cluster point of a sequence $\{x_n\}$ constructed by New Q-Newton's method is a critical point of $f$. Moreover, in case $f$ is a Morse function, then it is shown that if the initial point $x_0$ is random, then either $\lim _{n\rightarrow\infty}||x_n||=\infty$ or $\{x_n\}$ converges to a local minimum with quadratic rate of convergence. In an updated version of \cite{truong-etal}, this is used to show theoretically that one can quickly find roots of univariate meromorphic functions, and experiments show that even New Q-Newton's method works well for this task. 

This suggests that New Q-Newton's method Backtracking (or New Q-Newton's method) also has good theoretical guarantees when the cost function is real analytic, or more generally if both $f$ and $\nabla f$ satisfy Lojasiewicz gradient inequality. Besides Morse functions, the latter type of cost functions are met in many realistic applications, for example in robotics or Deep Neural Networks. Experimentally, this is confirmed on various examples, including non-smooth ones, see \cite{truong2021}. In this paper, we provide more theoretical support to this, by showing that a modification of New Q-Newton's method Backtracking indeed has this good theoretical guarantee. 

We start with a generalised framework, see Algorithm \ref{table:algG}. 

\medskip
{\color{blue}
 \begin{algorithm}[H]
\SetAlgoLined
\KwResult{Find a critical point of $f:\mathbb{R}^m\rightarrow \mathbb{R}$}
Given: $\{\delta_0,\delta_1,\ldots, \delta_{m}\} \subset \mathbb{R}$\ (chosen {\bf randomly}), $0<\tau $ and $0<\gamma _0<1$;\\
Initialization: $x_0\in \mathbb{R}^m$\;
$\kappa:=\frac{1}{2}\min _{i\not=j}|\delta _i-\delta _j|$;\\
 \For{$k=0,1,2\ldots$}{ 
    $j=0$\\
  \If{$\|\nabla f(x_k)\|\neq 0$}{
   \While{$minsp(\nabla^2f(x_k)+\delta_j \|\nabla f(x_k)\|^{\tau}Id)<\kappa  \|\nabla f(x_k)\|^{\tau}$}{$j=j+1$}}
  
 $A_k:=\nabla^2f(x_k)+\delta_j \|\nabla f(x_k)\|^{\tau}Id$\\
$e_{1,k},\ldots ,e_{m,k}$ an appropriately chosen orthonormal basis for $\mathbb{R}^m$\\
$w_k:=\sum _{i=1}^m\frac{<\nabla f(x_k),e_{i,k}>}{||A_k.e_{i,k}||}e_{i,k}$\\
$\widehat{w_k}:=w_k/\max \{1,||w_k||\}$\\
$\gamma :=\gamma _0$\\
 \If{$\|\nabla f(x_k)\|\neq 0$}{
   \While{$f(x_k-\gamma \widehat{w_k})-f(x_k)>-\gamma <\widehat{w_k},\nabla f(x_k)>/3$}{$\gamma =\gamma /3$}}

$x_{k+1}:=x_k-\gamma \widehat{w_k}$
   }
  \caption{New Q-Newton's method Backtracking G} \label{table:algG}
\end{algorithm}
}
\medskip

There are two main changes from New Q-Newton's method Backtracking, which bring in more generalities and flexibilities as well as more convergence guarantees. First, the choice of the exponent $\mu$ is now allowed to be $\leq 1$, which allows more convergence guarantees.  Second, we don't need to choose $e_{i,k}$ to be eigenvectors of $\nabla ^2f(x_k)$.

We now check that the new algorithm includes both New Q-Newton's method Backtracking and  Backtracking gradient descent as special cases. 

{\bf New Q-Newton's method Backtracking:} (see \cite{truong2021}) We choose $\tau =1+\alpha >1$ and $e_{1,k},\ldots ,e_{m,k}$ eigenvectors of $A_k$. Hence, if $\lambda _i$ is the corresponding eigenvalue of $e_{i,k}$ of $A_k$, then 
\begin{eqnarray*}
w_k=\sum _{i=1}^m\frac{<\nabla f(x_k),e_{i,k}>}{|\lambda _i|}e_{i,k}. 
\end{eqnarray*}

{\bf Backtracking gradient descent:} We choose $e_{1,k}=\nabla f(x_k)/||\nabla f(x_k)||$. Then, by definition, we have $<\nabla f(x_k),e_{l.k}>=0$ for $l=2,\ldots ,m$. Thus in this case: 
\begin{eqnarray*}
w_k=\frac{||\nabla f(x_k)||}{||\nabla ^2f(x_k).\nabla f(x_k)||}\nabla f(x_k). 
\end{eqnarray*}
Hence, we recover (rescaled) Backtracking gradient descent. 

Roughly speaking, near a non-degenerate critical point, then the best learning rate one can choose for Armijo's condition is about $1/||\nabla ^2f(x_k)||$ (see e.g. \cite{truong-nguyen2}). The above version gives a more precise value of the learning rate to start with. Compare also \cite{truong, truongnew, truong2021}.

We now define some new special cases of New Q-Newton's method Backtracking G.  We fix $e_1,\ldots ,e_m$ an orthonormal basis for $\mathbb{R}^m$. 

{\bf New Q-Newton's method Backtracking G1:} We choose $0<\tau \leq 1$, and choose $e_{1,k},\ldots ,e_{m,k}$ to consist of eigenvectors of $A_k$. This is almost the same as New Q-Newton's Backtracking, except that there $\tau =1+\alpha >1$. 

The benefit is that this allows one to prove new theoretical guarantees for real analytic cost functions, while the associated dynamical system is still $C^1$ near non-degenerate critical points and hence it still can avoid saddle points. 

{\bf New Q-Newton's method Backtracking G2:} We choose $0<\tau \leq 1$ and choose $e_{i,k}=e_i$ for all $i=1,\ldots ,m$. 

This version is probably the less computationally expensive, and hence has some favours of quasi-Newton's methods. 

{\bf New Q-Newton's method Backtracking G3:} We choose $e_{i,k}=e_i$ for all $i=1,\ldots ,m$ as follows: Near a non-degenerate critical point $x^*$, we choose $e_{j}(x)$ ($j=1,\ldots  ,m$) consistently by only one of the following schemes: 

i) $e_{j}(x)$'s are eigenvectors of $\nabla ^2f(x)$;

or 

ii) $e_{j}(x)$'s are $C^1$ functions for all $j=1,\ldots ,m$. 

Both New Q-Newton's method Backtracking (G1) (where we always follow rule i) and G2 (where we always follow rule ii) are special cases of New Q-Newton's method G3. Here is one way to combine both of them: 

{\bf New Q-Newton's method Backtracking G4:} If $minsp(A_k)\geq \kappa ||\nabla f(x_k)||^{1/2}$, then we choose $e_{1,k},\ldots ,e_{m,k}$ to be eigenvectors of $\nabla ^2f(x_k)$. Otherwise, we choose $e_{1.k},\ldots ,e_{m,k}$ to be $e_1,\ldots ,e_k$. 

In the remaining of this section, we present the good theoretical guarantees of the new algorithms.  We recall that if $\{x_n\}\subset \mathbb{R}^m$ has a subsequence $\{x_{n_k}\}$ converging to a point $x_{\infty}$, then $x_{\infty}$ is a {\bf cluster point} of $\{x_n\}$. By definition, the sequence $\{x_n\}$ is convergent if it is bounded and has exactly one cluster point.  We note that the below results generalise those known for Backtracking gradient descent and New Q-Newton's method Backtracking. 

\begin{theorem}
Let $f:\mathbb{R}^m\rightarrow \mathbb{R}$ be a $C^3$ function. Let $\{x_n\}$ be a sequence constructed by the New Q-Newton's method Backtracking G. 

0) (Descent property) $f(x_{n+1})\leq f(x_n)$ for all n. 

1) If $x_{\infty}$ is a {\bf cluster point} of $\{x_n\}$, then $\nabla f(x_{\infty})=0$. That is, $x_{\infty}$ is a {\bf critical point} of $f$.

2) If $0<\tau <1$: If $f$ has at most countably many critical points, then either $\lim _{n\rightarrow\infty}||x_n||=\infty$ or $\{x_n\}$ converges to a critical point of $f$. Moreover, if $f$ has compact sublevels, then only the second alternative happens. 

3) For G3: There is a set $\mathcal{A}\subset \mathbb{R}^m$ of Lebesgue measure $0$, so that if $x_0\notin \mathcal{A}$, and $x_n$ converges to $x_{\infty}$, then $x_{\infty}$ cannot be  a {\bf saddle point} of $f$. Hence, if $\nabla ^2fx_{\infty}$ is invertible, then $x_{\infty}$ must be a local minimum. 

4) If $x_n$ converges to $x_{\infty}$ which is a non-degenerate critical point of $f$, then the rate of convergence is at least {\bf linear}. If moreover, $x_{\infty}$ is a local minimum, then the rate of convergence is quadratic.  

5) (Capture theorem) If $x_{\infty}'$ is a non-degenerate local minimum of $f$, then for initial points $x_0'$ close enough to $x_{\infty}'$, the sequence $\{x_n'\}$  constructed by New Q-Newton's method Backtracking G will converge to $x_{\infty}'$. 

\label{Theorem1}\end{theorem}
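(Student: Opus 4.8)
The plan is to treat the six items by adapting the by-now standard analysis of Backtracking line search methods (cf. \cite{truong-nguyen2,truong,truong2021}), the genuinely new bookkeeping being on the direction $w_n$ and on the regularised matrix $A_n$. First one records that the algorithm is well posed: the inner loop choosing $\delta_j$ stops with $j\le m$ because each of the $m$ eigenvalues of $\nabla^2f(x_k)$ excludes at most one of the $m+1$ numbers $\delta_0,\dots,\delta_m$ (as in \cite{truong2021}), so $A_k$ is invertible with $\|A_ke_{i,k}\|\ge minsp(A_k)>0$ and $w_k$ is defined. For 0), note $\langle w_n,\nabla f(x_n)\rangle=\sum_i\langle\nabla f(x_n),e_{i,n}\rangle^2/\|A_ne_{i,n}\|\ge 0$, which is $>0$ when $\nabla f(x_n)\ne 0$ since $\{e_{i,n}\}$ is a basis; a standard second-order Taylor estimate (using $\|\widehat w_n\|\le 1$, so the quadratic term is controlled on the relevant compact segment) shows the Armijo loop terminates, and Armijo itself gives $f(x_{n+1})\le f(x_n)$. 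For 1), suppose $x_{n_k}\to x_\infty$ with $\nabla f(x_\infty)\ne 0$ and pass to a subsequence on which the index $j$ is constant; monotonicity plus continuity force $f(x_n)\to f(x_\infty)$ finite, hence $\gamma_n\langle\widehat w_n,\nabla f(x_n)\rangle\to 0$; near $x_\infty$ all of $\|\nabla f\|$, $\|A_{n_k}\|$, $1/minsp(A_{n_k})$ are bounded, so $\langle\widehat w_{n_k},\nabla f(x_{n_k})\rangle$ is bounded away from $0$ and therefore $\gamma_{n_k}\to 0$; then $3\gamma_{n_k}$ violated Armijo, and a Taylor expansion of $f(x_{n_k}-3\gamma_{n_k}\widehat w_{n_k})$ combined with this failure yields $\langle\widehat w_{n_k},\nabla f(x_{n_k})\rangle\to 0$, a contradiction.

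For 2), assume $\lim\|x_n\|\ne\infty$, so (as in 1)) $f(x_n)$ decreases to a finite limit and $\sum_n\gamma_n\langle\widehat w_n,\nabla f(x_n)\rangle<\infty$; the crux is $\|x_{n+1}-x_n\|\to 0$. Fix $\eta>0$. When $\|\nabla f(x_n)\|<\eta$ one has $\|w_n\|^2\le\|\nabla f(x_n)\|^2/minsp(A_n)^2\le\kappa^{-2}\|\nabla f(x_n)\|^{2(1-\tau)}$, so $\|x_{n+1}-x_n\|=\gamma_n\|\widehat w_n\|\le\gamma_0\kappa^{-1}\eta^{1-\tau}$ — this is exactly where $0<\tau<1$ is used. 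When $\|\nabla f(x_n)\|\ge\eta$, the mediant inequality gives $\langle\widehat w_n,\nabla f(x_n)\rangle\ge minsp(A_n)\,\|\widehat w_n\|^2\ge\kappa\eta^{\tau}\|\widehat w_n\|^2$, whence $f(x_n)-f(x_{n+1})\ge\frac{\kappa\eta^{\tau}}{3\gamma_0}\|x_{n+1}-x_n\|^2$, and summability forces $\|x_{n+1}-x_n\|\to 0$ over these indices; letting $\eta\downarrow 0$ gives $\|x_{n+1}-x_n\|\to 0$. By 1) every cluster point is a critical point, and since these are at most countable, the standard argument for descent sequences with vanishing increments (\cite{truong-nguyen2,truong2021}) — connectedness of the cluster set in the bounded case, plus an ``annulus-crossing'' argument in the merely-not-escaping case — produces a single cluster point, i.e. convergence. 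If $f$ has compact sublevels then $\{x_n\}\subset\{f\le f(x_0)\}$ is bounded, excluding the escape alternative.

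Part 3) is where I expect the real work. The content is that for G3 the update map $H$ is $C^1$ on a neighbourhood of every non-degenerate critical point $x^*$. This rests on three local facts near $x^*$, each a short computation: since $\nabla f(x^*)=0$ the regularised matrix at $x^*$ is just $\nabla^2f(x^*)$, which is invertible, so the while-condition is open and the index $j$ is locally constant; $\|w\|\to 0$, so $\widehat w=w$; and a Taylor expansion in the eigenbasis of $\nabla^2f(x^*)$, using $0<\gamma_0<1$, shows Armijo already holds at $\gamma=\gamma_0$ on a punctured neighbourhood of $x^*$, so the chosen learning rate is the constant $\gamma_0$ there. One then checks $w$ is $C^1$ at $x^*$ — the point being that the factor $\nabla f$ (which vanishes to first order at $x^*$) absorbs the non-smoothness of $\|\nabla f\|^{\tau}$ — so $H(x)=x-\gamma_0 w(x)$ with $Dw(x^*)=\operatorname{sgn}(\nabla^2f(x^*))$ in scheme i) (and in the $C^1$-eigenframe case of ii)) and $Dw(x^*)=P\,\nabla^2f(x^*)$ with $P$ symmetric positive definite in scheme ii) generally. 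By Sylvester's law of inertia $Dw(x^*)$ has the signature of $\nabla^2f(x^*)$, so if $x^*$ is a saddle then $DH(x^*)=Id-\gamma_0 Dw(x^*)$ has an eigenvalue of modulus $>1$. Non-degenerate critical points being isolated, there are only countably many saddles, and for each the stable-manifold theorem for $C^1$ maps (used as in \cite{truong2021}) bounds the set of initial points whose orbit converges to it by a $C^1$ submanifold of positive codimension, hence a null set; $\mathcal A$ is their union. The delicate points — hence the main obstacle — are verifying the three local facts uniformly across all instances of G3, the $C^1$ regularity of $w$, and securing the local-diffeomorphism property needed to pull null sets back along $H$ (clear in scheme i) since the eigenvalues of $DH(x^*)$ are $1\pm\gamma_0\ne 0$, and otherwise reducing to $\det DH\ne 0$ as in \cite{truong2021}).

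Finally, for 4): once $x_n\to x_\infty$ with $x_\infty$ non-degenerate, eventually $x_n$ lies in the $C^1$-neighbourhood of part 3), where $H$ fixes $x_\infty$, so $\|x_{n+1}-x_\infty\|=\|H(x_n)-H(x_\infty)\|\le(\sup\|DH\|)\,\|x_n-x_\infty\|$, i.e. the rate is at least linear. If $x_\infty$ is a local minimum then $\nabla^2f(x_\infty)$ is positive definite, the chosen index is $j=0$, $A(x)=\nabla^2f(x)+\delta_0\|\nabla f(x)\|^{\tau}Id$ is an $O(\|\nabla f(x)\|^{\tau})$-perturbation of $\nabla^2f(x)$, and $w(x)$ agrees with the Newton direction up to $O(\|\nabla f(x)\|^{1+\tau})=O(\|x-x_\infty\|^{1+\tau})$; the quadratic rate then follows by the argument of \cite{truong2021} for New Q-Newton's method Backtracking. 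For 5), near a non-degenerate local minimum $x_\infty'$ the same analysis makes $H$ a $C^1$ map fixing $x_\infty'$ whose differential has spectral radius $<1$ (it equals $1-\gamma_0$ in scheme i); in scheme ii) the backtracking enforces $\gamma\,\lambda_{\max}(P\nabla^2f(x_\infty'))<\tfrac43<2$, again giving spectral radius $<1$), so $x_\infty'$ is an attracting fixed point of $H$ and a whole neighbourhood of it lies in its basin of attraction, which is the assertion.
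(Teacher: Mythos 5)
Your proposal tracks the paper's (largely deferential) proof quite closely on parts 0--2 and 4: the Armijo--descent bookkeeping, the cluster-point argument via $minsp(A_k)\ge\kappa\|\nabla f(x_k)\|^{\tau}$, and the $\|w_k\|\le\kappa^{-1}\|\nabla f(x_k)\|^{1-\tau}$ estimate coupled with the real-increment/projective-metric argument from \cite{truong-nguyen1, truong2021} all match. Your dichotomy in part 2 (split according to $\|\nabla f(x_n)\|<\eta$ versus $\ge\eta$, with the mediant inequality $\langle\widehat w_n,\nabla f(x_n)\rangle\ge minsp(A_n)\|\widehat w_n\|^2$) is in fact a cleaner self-contained derivation of $\|x_{n+1}-x_n\|\to 0$ than the paper's sketch, and is a genuinely nice contribution.

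The significant divergences are in parts 3 and 5. In part 3 the paper treats the eigenvector case of G3 separately: since the eigenvector fields $x\mapsto e_j(x)$ need not be $C^1$, it falls back on the integral representations of the spectral projections $pr_{A(x),\pm}$ from \cite{truong-etal} rather than differentiating the frame directly. Your write-up only handles the $C^1$-frame case by direct differentiation and asserts the eigenvector case without addressing this obstruction; it deserves at least a sentence. (Both you and the paper rest on the claim that $\gamma(x)$ is identically $\gamma_0$ in a punctured neighbourhood of a non-degenerate critical point, which is transparent when $w$ is essentially the Newton direction --- scheme i) --- but is not automatic for scheme ii), where the local Armijo threshold $\gamma\le\frac{4}{3}\,\langle w,\nabla f\rangle/\langle w,\nabla^2 f\, w\rangle$ depends on the direction; this is a shared delicate point and you are right to flag it.)

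Part 5 is where your route genuinely departs from the paper's, and where there is a concrete gap. The paper dismisses the Capture theorem as ``well-known for methods having the descent property,'' i.e.\ the argument is purely via monotonicity of $f$, small increments near $x_\infty'$, and isolation of $x_\infty'$ as a critical point --- it does not need $DH(x_\infty')$ at all. You instead try to establish that $x_\infty'$ is a hyperbolic attracting fixed point, and for scheme ii) you assert that ``backtracking enforces $\gamma\,\lambda_{\max}(P\nabla^2f(x_\infty'))<\frac{4}{3}$.'' That inference is not sound: the Armijo inequality at the accepted step controls only the Rayleigh quotient of $P^{1/2}\nabla^2 f P^{1/2}$ in the one direction $P^{1/2}\nabla f(x)$, not its maximum over all directions. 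In general $\lambda_{\max}(P\nabla^2f(x_\infty'))$ can be as large as $m$ (one has $v^T A v\le m\sum_i\|Ae_i\|\,\langle v,e_i\rangle^2$), so for $\gamma_0$ not small the spectral radius of $Id-\gamma_0 P\nabla^2 f(x_\infty')$ can exceed $1$. Your spectral-radius argument therefore does not prove the Capture theorem in scheme ii); the descent-property argument that the paper intends is both simpler and not subject to this obstruction, and you should switch to it.

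A minor remark: in part 4, linear rate in the paper's sense ($\|x_{n+1}-x_\infty\|=O(\|x_n-x_\infty\|)$) follows as you say from $C^1$ regularity and $H(x_\infty)=x_\infty$, without needing $\sup\|DH\|<1$; that is fine as written.
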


The next application concerns Morse functions. We recall that $f$ is Morse if every critical point of its is non-degenerate. That is, if $\nabla f(x^*)=0$ then $\nabla ^2f(x^*)$ is invertible. By transversality theory, Morse functions are dense in the space of functions. We recall that a sublevel of $f$ is a set of the type $\{x\in \mathbb{R}^m:~f(x)\leq a\}$, for a given $a\in \mathbb{R}$. The function $f$ has compact sublevels if all of its sublevels are compact sets. It is not known if New Q-Newton's method satisfies Theorem \ref{Theorem2}.  

\begin{theorem}
Let $f$ be a $C^3$ function, which is Morse. Let $x_0$ be a random initial point, and let $\{x_n\}$ be a sequence constructed by the New Q-Newton's method Backtracking  G. Then either  $\lim _{n\rightarrow\infty}||x_n||=\infty$, or $x_n$ converges to a {\bf critical point} of $f$. Moreover, if $f$ has compact sublevels, then only the second alternative occurs. 
\label{Theorem2}\end{theorem}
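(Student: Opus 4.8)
The plan is to combine the structural results already proved in Theorem~\ref{Theorem1} with the one extra feature of Morse functions: every critical point is non-degenerate, hence isolated, so the critical set of $f$ is discrete and at most countable. First I would dispose of the case $0<\tau<1$, where the conclusion is \emph{literally} part 2) of Theorem~\ref{Theorem1} (the hypothesis ``at most countably many critical points'' is automatic for Morse $f$). So the actual content is the case $\tau\geq 1$, which includes the original New Q-Newton's method Backtracking ($\tau=1+\alpha$); for this I would run the following three-step scheme.

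\emph{Step 1 (vanishing successive steps).} By part 0) of Theorem~\ref{Theorem1} the values $f(x_n)$ are non-increasing. Assume $\lim_n\|x_n\|=\infty$ fails; then some subsequence is bounded and, passing to a sub-subsequence, converges to a point $x_\infty$, which by part 1) of Theorem~\ref{Theorem1} is a critical point of $f$. Monotonicity gives $f(x_n)\downarrow f(x_\infty)$, hence $\sum_n\big(f(x_n)-f(x_{n+1})\big)<\infty$, and Armijo's condition in Algorithm~\ref{table:algG} turns this into $\sum_n\gamma_n\langle\widehat{w_n},\nabla f(x_n)\rangle<\infty$, where $\gamma_n$ is the accepted learning rate. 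Using $\|\widehat{w_n}\|\le 1$, the defining bound $minsp(A_n)\ge\kappa\|\nabla f(x_n)\|^{\tau}$, the obvious bound on $sp(A_n)$ over a bounded set, and the identity $\langle w_n,\nabla f(x_n)\rangle=\sum_i\langle\nabla f(x_n),e_{i,n}\rangle^2/\|A_n e_{i,n}\|$, I would run the standard backtracking estimate (bounding the accepted $\gamma_n$ from below in terms of $\|\nabla f(x_n)\|$ using the $C^3$ bounds on $f$ over a bounded set) to deduce that $\|x_{n+1}-x_n\|\to 0$ along any portion of the orbit contained in a bounded set. This is exactly the analysis already carried out for Backtracking gradient descent in \cite{truong-nguyen2} and for New Q-Newton's method Backtracking in \cite{truong2021}, adapted to the present $w_n$.

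\emph{Step 2 (trapping near an isolated critical point).} Pick $r>0$ so small that $\overline{B}(x_\infty,r)$ contains no critical point of $f$ other than $x_\infty$. I claim the orbit eventually stays in $B(x_\infty,r)$. Otherwise, since $x_{n_k}\to x_\infty$, the orbit enters $B(x_\infty,r/2)$ infinitely often and leaves $B(x_\infty,r)$ infinitely often; because $\|x_{n+1}-x_n\|\to 0$ on the part of the orbit inside $\overline{B}(x_\infty,r)$ (Step 1), for each radius $\rho\in(r/2,r)$ the orbit meets every small closed annulus around the sphere $\|x-x_\infty\|=\rho$ infinitely often, producing a cluster point on that sphere; by part 1) of Theorem~\ref{Theorem1} that cluster point is a critical point of $f$ lying in $B(x_\infty,r)\setminus\{x_\infty\}$, contradicting the choice of $r$. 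Hence the orbit is eventually confined to $B(x_\infty,r)$, so $\{x_n\}$ is bounded, all its cluster points are critical points in $\overline{B}(x_\infty,r)$ and thus equal to $x_\infty$, and a bounded sequence with a unique cluster point converges; so $x_n\to x_\infty$.

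\emph{Step 3 (compact sublevels) and the main obstacle.} If every sublevel of $f$ is compact, then $f(x_n)\le f(x_0)$ traps the whole orbit in the compact set $\{f\le f(x_0)\}$, so $\lim_n\|x_n\|=\infty$ is impossible and Step 2 gives convergence to a critical point. I expect Step 1 to be the only genuinely technical point, since it is where the precise form of $w_n$ and of the backtracking rule must be exploited; Step 2 is a soft topological argument (a variant of Ostrowski's theorem on sequences with $\|x_{n+1}-x_n\|\to 0$) that uses nothing about the algorithm beyond parts 0) and 1) of Theorem~\ref{Theorem1} together with the isolation of critical points of a Morse function.
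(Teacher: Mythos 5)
Your three-step plan is, at the architectural level, the same as what the paper does: the paper's own proof is the single sentence ``use verbatim the proof in \cite{truong2021},'' and you also defer to that analysis while spelling out the (Ostrowski-type) topological skeleton. Step~2 and Step~3 are fine, and your observation that the case $0<\tau<1$ reduces to part 2) of Theorem~\ref{Theorem1} is correct and worth recording.

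The problem is in Step~1, and it is more than cosmetic. The ingredients you explicitly list there --- $\|\widehat{w_n}\|\le 1$, $minsp(A_n)\ge\kappa\|\nabla f(x_n)\|^{\tau}$, an upper bound on $sp(A_n)$ over a bounded set, and the identity for $\langle w_n,\nabla f(x_n)\rangle$ --- are \emph{not} sufficient to deduce $\|x_{n+1}-x_n\|\to 0$ along bounded orbit portions when $\tau\ge 1$ (which you rightly identify as the only case left to handle). The obstruction: when $\|\nabla f(x_n)\|\to 0$ and $\tau>1$, the lower bound $minsp(A_n)\ge\kappa\|\nabla f(x_n)\|^{\tau}$ degenerates faster than $\|\nabla f(x_n)\|$, so the raw estimate $\|w_n\|\le\|\nabla f(x_n)\|/minsp(A_n)$ permits $\|w_n\|\to\infty$, hence $\|\widehat{w_n}\|=1$, and one then only knows $\gamma_n\langle\widehat{w_n},\nabla f(x_n)\rangle\to 0$, which does not force $\gamma_n=\gamma_n\|\widehat{w_n}\|\to 0$ because $\langle\widehat{w_n},\nabla f(x_n)\rangle$ may itself tend to $0$. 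In other words, your Step~1 as written would also ``prove'' the conclusion for any $C^3$ $f$ with at most countably many critical points and $\tau>1$, which is not what Theorem~\ref{Theorem1} part~2) gives and is surely false in general. The Morse hypothesis is therefore needed already in Step~1, not only for the isolation argument in Step~2: near a non-degenerate critical point $x^*$ one has $minsp(\nabla^2 f(x^*))>0$, and since $\delta\|\nabla f(x)\|^{\tau}\to 0$ there, $minsp(A(x))$ is bounded below by a positive constant \emph{independent of} $\|\nabla f(x)\|$. Combined with $\langle\widehat{w_n},\nabla f(x_n)\rangle\ge minsp(A_n)\|\widehat{w_n}\|^2$ this gives $\gamma_n\|\widehat{w_n}\|^2\to 0$, hence $\gamma_n\|\widehat{w_n}\|\to 0$, near critical points, while away from critical points $\langle\widehat{w_n},\nabla f(x_n)\rangle$ is bounded below and $\gamma_n\to 0$. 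Also note that the mechanism is not ``bound $\gamma_n$ from below and conclude'': a lower bound on $\gamma_n$ by itself points the wrong way for proving that the step length vanishes.
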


Note that when $0<\tau <1$, then part 2 of Theorem \ref{Theorem1} gives a stronger result.   

Next, we discuss the applications to cost functions $f$ which are real analytic or more generally both $f$ and $\nabla f$ satisfy Lojasiewicz gradient inequality (need near critical points of $f$ only). These types of cost functions appear in many realistic applications such as in robotics and Deep Neural Network. Recall that a function $f$ satisfies Lojasiewicz gradient inequality at a point $x^*$ if there is a small neighbourhood $U$ of $x^*$, a constant $0<\mu <1$ and a constant $C>0$ so that for all $x,y\in U$ we have
\begin{eqnarray*}
|f(x)-f(y)|^{\mu}\leq C||\nabla f(x)||. 
\end{eqnarray*}

We will need the following quantity in statements of the next results:

{\bf Definition (Lojasiewicz exponent):} Assume that $f:\mathbb{R}^m\rightarrow \mathbb{R}$ has the Lojasiewicz gradient inequality near its critical points. Then at each critical point $x^*$ of $f$, we define

$\mu (x^*):=\inf \{\mu : $ there is an open neighbourhood $U$ of $x^*$ and a constant $C>0$ so that for all $x,y\in U$ we have $|f(x)-f(y)|^\mu \leq C||\nabla f(x)||\}$.

We say that the gradient $\nabla f$ satisfies Lojasiewicz gradient inequality at a point $x^*$, if  the function $F(x,y)=<\nabla f(x),y>$ $:\mathbb{R}^{2m}\rightarrow \mathbb{R}$ satisfies Lojasiewicz gradient inequality. By Lojasiewicz' theorem, if $f$ is real analytic (and hence $F(x,y)$ is also real analytic), then $f$ and its gradient satisfy Lojasiewicz gradient inequality. Hence, the next theorem can be applied to {\bf quickly} finding roots of systems of (real or complex) analytic equations. Part 3 of its in particularly generalises a result in \cite{truong2021}, which treats the case of finding roots of univariate meromorphic functions (i.e. meromorphic functions in 1 variable). 

\begin{theorem} Assume that $f:\mathbb{R}^m\rightarrow \mathbb{R}$ satisfies the Lojasiewicz gradient inequality. Let $\{x_n\}$ be a sequence constructed by New Q-Newton's Backtracking G. Assume also that $0<\tau \leq 1$. 

1) Assume that $f$ has at most countably many critical points, and $\nabla f$ satisfies the Lojasiewicz gradient inequality. Then either $\lim _{n\rightarrow\infty}||x_n||=\infty$ or $\{x_n\}$ converges to a critical point of $f$. 

2) Assume that for all critical points $x^*$ of $f$, we have $ \mu (x^*)\times (1+\tau ) <1$. For $\tau =1$, assume  moreover that $\nabla f$ also satisfies the Lojasiewicz gradient inequality. Then either $\lim _{n\rightarrow\infty}||x_n||=\infty$ or $\{x_n\}$ converges to a critical point of $f$. 

3) Assume that whenever a subsequence $x_{k_n}$ converges,  we have 
\begin{eqnarray*}
\liminf _{n\rightarrow\infty}\frac{\min _{i\in \Lambda _{k_n}} ||A_{k_n}.e_{i,k}||}{\max _{i\in \Lambda _{k_n}} ||A_{k_n}.e_{i,k_n}||}>0, 
\end{eqnarray*}
where $\Lambda _{k_n}=\{i:~<\nabla f(x_{k_n}),e_{i,k_n}>\not= 0\}$. Moreover, if $\tau =1$, then assume that $\nabla f$  also satisfies the Lojasiewicz gradient inequality. Then either $\lim _{n\rightarrow\infty}||x_n||=\infty$ or $\{x_n\}$ converges to a critical point of $f$. 
\label{Theorem3}\end{theorem}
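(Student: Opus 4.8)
The plan is to reduce all three parts to a single mechanism: the standard Absil–Mahony–Andrews / Łojasiewicz argument that a descent sequence with a suitable "angle condition" and "length-step lower bound" is either unbounded or convergent. By part 0 of Theorem \ref{Theorem1} the sequence $\{f(x_n)\}$ is nonincreasing; assuming $\{x_n\}$ does not go to infinity, I would first pick a convergent subsequence $x_{k_n}\to x_\infty$, and by part 1 of Theorem \ref{Theorem1} conclude $\nabla f(x_\infty)=0$, so $f(x_n)\to f(x_\infty)=:c$. If $f(x_n)=c$ for some finite $n$ the Armijo step forces $x_{n+1}=x_n$ and we are done, so assume $f(x_n)>c$ strictly. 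The goal is then to show $\sum_n \|x_{n+1}-x_n\|<\infty$, which (together with boundedness of the subsequence and the fact that $x_\infty$ is isolated among the level-$c$ critical points when there are countably many, as in part 1, or via the usual neighbourhood-trapping argument) yields convergence of the whole sequence to $x_\infty$.

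The key quantitative step is to control, near $x_\infty$, two ratios. First, Armijo's condition gives $f(x_n)-f(x_{n+1})\geq \tfrac{1}{3}\gamma_n\langle \widehat{w_n},\nabla f(x_n)\rangle$, and I need a lower bound on $\gamma_n\langle \widehat{w_n},\nabla f(x_n)\rangle$ in terms of $\|\nabla f(x_n)\|$ and $\|x_{n+1}-x_n\|=\gamma_n\|\widehat{w_n}\|$. Writing $\langle w_n,\nabla f(x_n)\rangle=\sum_{i\in\Lambda_n}\frac{\langle\nabla f(x_n),e_{i,n}\rangle^2}{\|A_n e_{i,n}\|}$ and $\|w_n\|^2=\sum_{i\in\Lambda_n}\frac{\langle\nabla f(x_n),e_{i,n}\rangle^2}{\|A_n e_{i,n}\|^2}$, and using $minsp(A_n)\geq \kappa\|\nabla f(x_n)\|^\tau$ together with $sp(A_n)\leq \|\nabla^2 f(x_n)\|+|\delta_j|\,\|\nabla f(x_n)\|^\tau = O(1)$ on a bounded set, I get $\langle w_n,\nabla f(x_n)\rangle\geq c_1\|\nabla f(x_n)\|^{1-\tau}\|w_n\|^2$ and $\langle w_n,\nabla f(x_n)\rangle\geq c_2\|\nabla f(x_n)\|^2$ for constants depending on the compact neighbourhood. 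This is precisely where the hypotheses of the three parts diverge: in part 3 the explicit ratio assumption $\liminf \min_{i\in\Lambda_{k_n}}\|A_{k_n}e_{i,k_n}\|\,/\,\max_{i\in\Lambda_{k_n}}\|A_{k_n}e_{i,k_n}\|>0$ is exactly what makes $w_n$ nearly parallel to the "preconditioned gradient" and hence makes the angle between $w_n$ and $\nabla f(x_n)$ bounded away from $\pi/2$ uniformly; in part 2 the same conclusion follows because $\|\nabla f(x_n)\|\to 0$ and $\tau\le 1$ forces all $\|A_n e_{i,n}\|$ to be comparable to $\|\nabla^2 f(x_\infty)\|$ up to $O(\|\nabla f(x_n)\|^\tau)$ perturbations when $\nabla^2 f(x_\infty)$ is invertible (and when it is degenerate one uses the Łojasiewicz exponent bound $\mu(x^*)(1+\tau)<1$ to absorb the blow-up); in part 1 the countability of critical points plus the Łojasiewicz inequality for $\nabla f$ itself is used to rule out the sequence drifting along a positive-dimensional critical set, and to get the needed step lower bound when $\tau=1$ where the $\|\nabla f(x_n)\|^{1-\tau}$ factor disappears.

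Next I would combine the descent estimate with the Łojasiewicz gradient inequality at $x_\infty$: with $\phi(t)=\frac{C}{1-\mu}(t-c)^{1-\mu}$ one has, on the neighbourhood $U$ where $|f(x)-c|^\mu\le C\|\nabla f(x)\|$, the concavity/telescoping bound
\begin{eqnarray*}
\phi(f(x_n))-\phi(f(x_{n+1})) \geq \frac{f(x_n)-f(x_{n+1})}{C\|\nabla f(x_n)\|}
\geq c_3\,\frac{\gamma_n\langle\widehat{w_n},\nabla f(x_n)\rangle}{\|\nabla f(x_n)\|}
\geq c_4\,\|x_{n+1}-x_n\|,
\end{eqnarray*}
where the last inequality uses the two ratio bounds above to pass from $\gamma_n\langle\widehat{w_n},\nabla f(x_n)\rangle/\|\nabla f(x_n)\|$ to $\gamma_n\|\widehat{w_n}\|$ — this is the place where a lower bound on the backtracking learning rate $\gamma_n$ is needed, and I would get it from the $C^3$ hypothesis via a standard Taylor/Armijo argument (the largest admissible $\gamma$ is bounded below by a constant times $1/sp(\nabla^2 f)$ times the angle factor, cf. the discussion after Algorithm \ref{table:algG} and \cite{truong-nguyen2}). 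Summing the telescoping inequality over $n$ gives $\sum_n\|x_{n+1}-x_n\|\le \frac{1}{c_4}\phi(f(x_{N})-c)<\infty$, provided we know the entire tail stays in $U$; this last point is the usual induction: choose the initial index large enough (using $x_{k_n}\to x_\infty$ and $f(x_n)\downarrow c$) that the total possible displacement $\frac{1}{c_4}\phi(f(x_{k_n})-c)$ is smaller than the radius of $U$, so the sequence cannot escape. Convergence to $x_\infty$ follows, and then part 4 of Theorem \ref{Theorem1} upgrades the rate when $x_\infty$ is non-degenerate.

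The main obstacle I expect is the uniform lower bound on $\gamma_n\|\widehat{w_n}\|$ relative to $\langle\widehat{w_n},\nabla f(x_n)\rangle/\|\nabla f(x_n)\|$ near a possibly degenerate critical point: when $\nabla^2 f(x_\infty)$ is singular, $\|A_n e_{i,n}\|$ can be as small as $\kappa\|\nabla f(x_n)\|^\tau\to 0$, so $w_n$ may be much longer than $\nabla f(x_n)$ and the backtracking rate $\gamma_n$ correspondingly small; controlling the product requires carefully tracking the $\|\nabla f(x_n)\|^{\pm\tau}$ powers and is exactly why the hypothesis $\mu(x^*)(1+\tau)<1$ (part 2) or the explicit ratio condition (part 3) is imposed — these are precisely the bookkeeping conditions that make the exponent arithmetic close. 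A secondary subtlety is that the $e_{i,n}$ need not vary continuously, so one cannot assume $\Lambda_n$ or the directions stabilize; I would handle this by only ever using the two scalar inequalities for $\langle w_n,\nabla f(x_n)\rangle$ and $\|w_n\|$ derived above, which are insensitive to the choice of orthonormal basis beyond the spectral bounds on $A_n$.
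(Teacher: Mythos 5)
Your proposal captures the right ingredients (descent, Armijo, spectral bounds on $A_k$, the \L{}ojasiewicz telescoping device), but the central display does not close as written, and you fold all three parts into one telescoping scheme whereas the paper treats part 1 by a genuinely different mechanism.

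The most serious problem is the last step of your displayed chain
\[
\phi(f(x_n))-\phi(f(x_{n+1})) \;\geq\; c_3\,\frac{\gamma_n\langle\widehat{w_n},\nabla f(x_n)\rangle}{\|\nabla f(x_n)\|}\;\overset{?}{\geq}\; c_4\,\|x_{n+1}-x_n\|.
\]
With $\phi(t)\propto (t-c)^{1-\mu}$ this requires a uniform angle bound $\langle w_n,\nabla f(x_n)\rangle \geq c\,\|w_n\|\,\|\nabla f(x_n)\|$. But from $minsp(A_n)\geq\kappa\|\nabla f(x_n)\|^\tau$ together with $sp(A_n)=O(1)$ on a bounded set, what one actually gets is $\langle w_n,\nabla f(x_n)\rangle \geq C\|w_n\|\,\|\nabla f(x_n)\|^{1+\tau}$, so the effective angle degenerates like $\|\nabla f(x_n)\|^\tau\to 0$ near a degenerate critical point. (Incidentally your intermediate bound should read $\langle w_n,\nabla f(x_n)\rangle\geq \kappa\|\nabla f(x_n)\|^{\tau}\|w_n\|^2$, not $\|\nabla f(x_n)\|^{1-\tau}\|w_n\|^2$.) Only under part 3's hypothesis, which forces $\min_i\|A_n e_{i,n}\|/\max_i\|A_n e_{i,n}\|$ bounded below, is the angle uniform; for parts 1 and 2 the displayed chain is simply false. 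The paper's fix in part 2 is to change the potential: it establishes $\langle w_n,\nabla f(x_n)\rangle\geq C\|w_n\|\,|f(x_{n+1})-f(x_n)|^{\theta}$ with $\theta=\mu(1+\tau)<1$ and integrates $(t-c)^{-\theta}$, which is exactly where the hypothesis $\mu(x^*)(1+\tau)<1$ is consumed. You flag the exponent bookkeeping in prose, but the correction has to appear inside the telescoped potential, not in $(t-c)^{1-\mu}$.

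Second, the paper's part 1 is not an Absil--Mahony--Andrews telescoping at all: it invokes the projective-metric ``real sequence'' lemma of \cite{truong-nguyen1}, which needs only $\|x_{n+1}-x_n\|\to 0$ together with the countability of critical points, with no summability required. The \L{}ojasiewicz inequality for $\nabla f$ enters there in a specific form you do not reproduce: applied to $F(x,y)=\langle\nabla f(x),y\rangle$ it yields $|\langle\nabla f(x_{n_k}),e_{j,n_k}\rangle|^{\mu}\leq C\|A_{n_k}e_{j,n_k}\|$, hence $\|w_{n_k}\|\leq C\|\nabla f(x_{n_k})\|^{1-\mu}\to 0$ even when $\tau=1$ (the case where the bound from part 2 of Theorem \ref{Theorem1} becomes vacuous). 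Absorbing part 1 into your summability argument is a genuine gap: countably many critical points plus \L{}ojasiewicz for $\nabla f$ do not supply a uniform angle bound nor an exponent $\theta<1$, so your route does not deliver $\sum_n\|x_{n+1}-x_n\|<\infty$ there.
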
  
The result in part 1 is new only when $\mu =1$ (the remaining case is covered by part 2 of Theorem \ref{Theorem1}). In parts 2 and 3, we do not require that $f$ has only countably many critical points. In part 2), it is allowed that $\sup _{\nabla f(x^*)=0}\mu (x^*)\times (1+\tau )=1$, provided the supremum is not attained at any critical point $x^*$. On the other hand, since in general $\mu (x^*)\geq 1/2$, it follows that if $\mu (x^*)(1+\tau )<1$ then we should have $\tau <1$. The condition in part 3) is proven in \cite{truong-etal} to be satisfied for the case $f:\mathbb{R}^2\rightarrow \mathbb{R}$ is defined as $f=u^2+v^2$, where $u=$ the real part of $g$ and $v=$ the imaginary part of $g$, and $g$ is a univariate holomorphic (more generally, meromorphic) function. The next result is inspired by the mentioned result in \cite{truong2021}, and generalises it for the case of polynomial equations. (Note, however, the proof in \cite{truong2021} establishes the conditions in part 3 of Theorem \ref{Theorem3}, while the proof of the next theorem relies on part 2 of Theorem \ref{Theorem3}.) The result is separated from Theorem \ref{Theorem3} to emphasise its practical usefulness. 

\begin{theorem} Let $P_1,\ldots ,P_N$ be polynomials (in either real or complex variables). Let $f=|P_1|^2+\ldots +|P_N|^2$ be a function from $\mathbb{R}^m\rightarrow \mathbb{R}$. There exists $0<\tau _0<1$ depending only on the dimension and the degrees of the polynomials $P_1,\ldots ,P_m$ so that if $\{x_n\}$ is a sequence constructed by New Q-Newton's method Backtracking G applied to $f$, with $\tau \leq \tau _0$, then we have a bifurcation: Either

i) $\lim _{n\rightarrow\infty}||x_n||=\infty$, 

or 

ii) the whole sequence $\{x_n\}$ converges to a critical point of $f$.

\label{Theorem4}\end{theorem}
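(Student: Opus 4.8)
The plan is to deduce Theorem \ref{Theorem4} from part~2 of Theorem \ref{Theorem3} by producing, for the specific cost function $f=|P_1|^2+\ldots +|P_N|^2$, a single exponent $\tau _0\in(0,1)$ — depending only on $m$ and the degrees of the $P_i$ — such that $\mu (x^*)(1+\tau )<1$ holds \emph{simultaneously} at every critical point $x^*$ of $f$ whenever $0<\tau \le \tau _0$. First I would record that $f$ is a real polynomial of controlled degree: if the $P_i$ are real polynomials on $\mathbb{R}^m$ then $f=\sum_iP_i^2$ has degree $d\le 2\max_i\deg P_i$; if the $P_i$ are complex, I would write each complex variable as a pair of real variables and use $|P_i|^2=(\Re P_i)^2+(\Im P_i)^2$, so that $f$ becomes a real polynomial in at most $2m$ real variables, still of degree $d\le 2\max_i\deg P_i$. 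In particular $f$ is real analytic, so by Lojasiewicz's theorem it satisfies the Lojasiewicz gradient inequality at each of its critical points and $\mu(x^*)$ is well defined and strictly less than $1$ for each such $x^*$. Since $\tau_0$ will be taken $<1$, the extra hypothesis in part~2 of Theorem \ref{Theorem3} (that $\nabla f$ also satisfy the Lojasiewicz inequality, needed only when $\tau=1$) never arises.

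The key input is an \emph{effective}, i.e.\ uniform, bound on the Lojasiewicz gradient exponent of a polynomial in terms of the number of variables and the degree only. I would invoke an estimate of the type proved by D'Acunto and Kurdyka: for a polynomial $g$ of degree $d\ge 2$ in $n$ real variables with $g(x^*)=0$ and $\nabla g(x^*)=0$, one has $\|\nabla g(x)\|\ge C\,|g(x)|^{\varrho}$ on a neighbourhood of $x^*$ with $\varrho\le 1-\tfrac{1}{d(3d-3)^{n-1}}$; the crucial point is that this \textbf{exponent} is uniform over $x^*$ — only the constant $C$ and the size of the neighbourhood depend on $x^*$. Applying this to $g=f-f(x^*)$ (which has the same degree $d$), with $n=m$ or $n=2m$ as above, I obtain
\begin{eqnarray*}
\sup_{\nabla f(x^*)=0}\mu(x^*)\le 1-R,\qquad R=R(m,\deg P_1,\ldots,\deg P_N)>0,
\end{eqnarray*}
a quantity depending only on the dimension and the degrees. (One small point to verify here is that the two–variable form $|f(x)-f(y)|^{\mu}\le C\|\nabla f(x)\|$ used in the paper's definition of $\mu(x^*)$ is governed by the same classical Lojasiewicz exponent, so that the effective bounds genuinely apply to it.)

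With $R$ in hand I would set $\tau_0:=\min\{\tfrac12,\ \tfrac{R}{2(1-R)}\}$, a number in $(0,1)$ depending only on $m$ and the degrees, which satisfies $(1-R)(1+\tau_0)<1$. Then for every $\tau\le\tau_0$ and every critical point $x^*$ of $f$ we have $\mu(x^*)(1+\tau)\le(1-R)(1+\tau_0)<1$, so the hypothesis of part~2 of Theorem \ref{Theorem3} is met; that theorem then yields exactly the stated bifurcation: either $\lim_{n\to\infty}\|x_n\|=\infty$, or the whole sequence $\{x_n\}$ converges to a critical point of $f$.

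I expect the \textbf{main obstacle} to be the uniformity of the Lojasiewicz exponent over the entire critical set of $f$, not any single pointwise inequality. The common zero locus $\{P_1=\cdots=P_N=0\}$ can be positive–dimensional, so $f$ may have infinitely many critical points (a whole variety of global minima), and a pointwise Lojasiewicz inequality at each would not suffice — one genuinely needs $\sup_{x^*}\mu(x^*)<1$, and moreover a bound independent of the (arbitrary) coefficients of the $P_i$. This is precisely why the \emph{effective} Lojasiewicz estimates, which depend only on $n$ and $d$, are the right tool, and why $\tau_0$ can be made to depend only on the dimension and the degrees. The remaining steps — the complex-to-real reduction together with the degree count for $f$, and the bookkeeping translation into part~2 of Theorem \ref{Theorem3} — are routine.
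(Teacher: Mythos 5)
Your argument is essentially the paper's own proof: both reduce to the D'Acunto--Kurdyka effective bound $\mu(x^*)\leq 1-1/R(n,d)$ with $R(n,d)=d(3d-3)^{n-1}$, which is uniform in $x^*$ and depends only on the dimension and degree, and then choose $\tau$ small enough that $\mu(x^*)(1+\tau)<1$ holds at every critical point so that part~2 of Theorem~\ref{Theorem3} applies. You are somewhat more careful than the paper in spelling out the complex-to-real reduction and in emphasizing why uniformity of the exponent over a possibly positive-dimensional critical set is the key point, but the route is the same.
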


In the above theorem, a zero of the system of equations $P_1=\ldots =P_N=0$ is a global minimum of $f$. Theorem \ref{Theorem4} mainly concerns the case where the set of critical points of $f$ is uncountable. In the case where the set of critical points of $f$ is countable one can use also part 1 of Theorem \ref{Theorem3}. For avoidance of saddle point and rate of convergence, one can use Theorem \ref{Theorem1}.

The remaining of this paper is organised as follows. In Section \ref{Section2} we prove the main theoretical results. In the last section we will explore the feasibility of implementation of the algorithms and of the assumptions in the theoretical results.

{\bf Acknowledgments.} The author is partially supported by Young Research Talents grant 300814 from Research Council of Norway.

\section{Proofs of main results}\label{Section2} In this section, we prove Theorems \ref{Theorem1}, \ref{Theorem2}, \ref{Theorem3} and \ref{Theorem4}.

\begin{proof}[Proof of Theorem \ref{Theorem1}] 

0) This is obvious. 

1) We recall that we have the inequality $||A_k.e||\geq minsp(A_k)$, and by definition of the algorithm  $minsp(A_k)\geq \kappa ||\nabla f(x_k)||^{\tau}$ for all $k$ and all vectors $e$ of unit length.  Then we can follow \cite{truong2021} to complete the proof. 

2) Since $0<\tau <1$, we have $||A_k.e_{i,k}||\geq \kappa ||\nabla f(x_k)||^{\tau }$ for all $i=1,\ldots ,m$. Therefore
\begin{eqnarray*}
||w_k||&=&(\sum _{i=1}^m\frac{<\nabla f(x_k),e_{i,k}>^2}{||A_k.e_{i,k}||^2})^{1/2}\\
&\leq&\frac{||\nabla f(x_k)||}{\min _i||A_k.e_{i.k}||}\\
&\leq&||\frac{1}{\kappa}\nabla f(x_k)||^{1-\mu}. 
\end{eqnarray*}
Assume now that $x_{k_n}$ converges to a point $x^*$. Then we know from part 1) that $\nabla f(x^*)=0$. Therefore,
\begin{eqnarray*}
||x_{k_n+1}-x_{k_n}||\leq ||w_{k_n}||\leq \frac{1}{\kappa}||\nabla f(x_{k_n})||\rightarrow 0.
\end{eqnarray*}
Hence, if $d(.,.)$ is the projective metric (see \cite{truong-nguyen1, truong2021}), then $\lim _{k\rightarrow\infty}d(x_{k},x_{k+1})=0$. We then can finish the proof as in \cite{truong-nguyen1, truong2021}.   

3) By the proof of \cite{truong-etal, truongnew}, it suffices to show that near a non-degenerate critical point, the associated dynamical system is $C^1$, and hence Stable-Central manifold theorems can be applied. So, we let $x^*$ be a non-degenerate critical point of $f$, and show that the assignment $x\mapsto H(x)=x-\gamma (x)w(x)$ is $C^1$. 

First of all, as argued in \cite{truong2021}, near a non-degenerate critical point we have $\gamma (x)=1$. Therefore, we need to show that the assignment $x\mapsto x-w(x)$ is $C^1$ near $x^*$. This is clearly the case when $\tau >1$, in that case the map $x\mapsto ||\nabla f(x)||^{\tau}$ is $C^1$. 

Even when $\mu \leq 1$, we still have that $H(x)$ is $C^1$. Indeed, since $x\mapsto ||\nabla f(x)||^{\tau}$ is $C^1$  at non-critical points, it can be checked that $H(x)$ is $C^1$ for non-critical points. Since $x^*$ is the only critical point of $f$ in a small neighbourhood of itself,  it remains to check that the gradient of $H$ at $x^*$ exists, and gives rise to a  $C^1$ function. 

We consider first the case where $e_{1}(x),\ldots ,e_m(x)$'s are $C^1$ functions near $x^*$. Then, 
\begin{eqnarray*}
w(x)=\sum _{i=1}^m\frac{<\nabla f(x),e_i(x)>}{||A(x).e_i(x)||}e_i(x).
\end{eqnarray*}
provided $x\not= x^*$. We then have
\begin{eqnarray*}
\nabla w(x)=\sum _{i=1}^m\frac{\nabla (<\nabla  f(x),e_i(x)>e_{i}(x))}{||A(x).e_i(x)||}+\sum _{i=1}^m\frac{<\nabla f(x),e_i(x)>}{ ||A(x).e_i(x)||^2}e_i(x)\otimes \nabla ||A(x).e_i(x)||.
\end{eqnarray*}
The first sum on the RHS is clearly defined at $x^*$ as well, and is continuous there. Hence, it suffices to show that the same is true for the second sum. We will show that indeed
\begin{eqnarray*}
\lim _{x\rightarrow x^*}\sum _{i=1}^m\frac{<\nabla f(x),e_i(x)>}{\nabla ||A(x).e_i(x)||}e_i(x)=0.
\end{eqnarray*}
To this end, we recall that by arguments in \cite{truong-etal}, near $x^*$ we have $\delta (x)=\delta _0$. Hence $A(x)=\nabla ^2f(x)+\delta _0||\nabla f(x)||^{\tau}$. Therefore, 
\begin{eqnarray*}
\nabla ||A(x).e_i(x)||&=&\nabla ||(\nabla ^2f(x)+\delta _0||\nabla f(x)||^{\tau}).e_{i}(x)||\\
&=&\nabla (\sum _{j=1}^m<(\nabla ^2f(x)+\delta _0||\nabla f(x)||^{\tau}).e_{i}(x),e_j(x)>)^{1/2}\\
&=&\frac{\sum _{j=1}^m<\nabla (\nabla ^2f(x)+\delta _0||\nabla f(x)||^{\tau}).e_{i}(x),e_j(x)>}{2||A(x).e_i(x)||}.
\end{eqnarray*}
 By chain rule,  the norm of $\nabla ||\nabla f(x)||^{\tau }$ is bounded by $||\nabla f(x)||^{\tau -1}$. Therefore,  the norm of 
 \begin{eqnarray*}
 \sum _{i=1}^m\frac{<\nabla f(x),e_i(x)>}{ ||A(x).e_i(x)||^2}e_i(x)\otimes \nabla ||A(x).e_i(x)||
 \end{eqnarray*}
is bounded by $||\nabla f(x)||^{\tau}$, which converges to $0$ as $x$ converges to $x^*$. From this, it is easy to complete the proof. 

Now we consider the remaining case that $e_1(x),\ldots ,e_m(x)$ are eigenvectors of $\nabla ^2f(x)$ near $x^*$. In this case, the functions $x\mapsto e_j(x)$ may not be $C^1$, and hence the above argument cannot be directly applied. However, we can use the integral representations of $pr_{A(x),\pm}$ as in \cite{truong-etal}, and then the above argument can be similarly applied. 

4) We can prove as in \cite{truong-etal}. 

5) This is well-known for methods having the descent property in part 0. 
\end{proof}

\begin{proof}[Proof of Theorem \ref{Theorem2}]
We can use verbatim the proof for the corresponding property for New Q-Newton's method Backtracking in \cite{truong2021}. 
\end{proof}

\begin{proof}[Proof of Theorem \ref{Theorem3}]

1) As in part 2 of Theorem \ref{Theorem1}, it suffices to show that  for a subsequence $x_{n_k}$ converging to a critical point $x^*$: $\lim_{k\rightarrow\infty}||w_{n_k}||=0$. We look at a new function $F:\mathbb{R}^{2m}\rightarrow \mathbb{R}$ given by $F(x,y)=<\nabla f(x),y>$. We have $F(x=x^*,y=0)=0$. Moreover, $\nabla F=(\nabla ^2f(x).y,\nabla f(x))$.  Hence, by assumption, there exists $C>0$ and $\mu <1$, and an open neighborhood $U$ of $x^*$ as well as $\epsilon >0$ so that for all $x\in U$ and all $||y||=1$:
\begin{eqnarray*}
|<\nabla f(x),y>|^{\mu }\leq C(||\nabla ^2f(x).y||+||\nabla f(x)||). 
\end{eqnarray*}

Then, by replacing $C$ with a bigger constant, and substituting $x$ by $x_{n_k}$ and $y$ by $e_{j,k}$ we obtain also the inequality
\begin{eqnarray*}
|<\nabla f(x_{n_k}),e_{j,k}>|^{\mu }\leq C(||\nabla ^2f(x_{n_k}).e_{j,k}+\delta ||\nabla f(x_{n_k})||e_{j,k}||+||\nabla f(x_{n_k})||). 
\end{eqnarray*}
If we choose $\delta$ appropriately from the finite set $\delta _0,\ldots ,\delta _m$, we obtain $A(x_{n_k})$.  

Since $\tau \leq 1$ and $||e_{j,k}||=1$, it follows that $||A(x).e_{j,k}||\geq minsp(A(x))\geq \kappa ||\nabla f(x)||$. Therefore, by replacing $C$ by a bigger constant, we obtain that $|<\nabla f(x_{n_k}),e_{j,k}>|^{\mu }\leq C||A(x).e_{j,k}||$ for every $j$. Therefore, we obtain
\begin{eqnarray*}
||w_{n_k}||\leq C||\nabla f(x_{n_k})||^{1-\mu}\rightarrow 0, 
\end{eqnarray*}
as wanted. 

2) Similar to \cite{truong-etal}, we need to show that if $x^*$ is a critical point of $f$, there exists a constant $C>0$ and $\theta <1$ so that if a subsequence $\{x_{n_k}\}$ converges to $x^*$, then:
\begin{eqnarray*}
\lim _{k\rightarrow \infty}||w_{n_k}||&=&0,\\
<w_{n_k},\nabla f(x_{n_k})>&\geq& C||w_{n_k}||\times |f(x_{n_{k+1}})-f(x_{n_k})|^{\theta}.
\end{eqnarray*}

The equality follows from part 2 of Theorem \ref{Theorem1} (if $0<\tau <1$) and from part 1 above if $\tau =1$. 

Now we prove the inequality. We have
\begin{eqnarray*}
||A_{n_k}.e_{j,n_k}||\geq minsp(A_{n_k})\geq \kappa ||\nabla f(x_{n_k})||^{\tau}. 
\end{eqnarray*}

Then, computed as in \cite{truong2021}, we obtain
\begin{eqnarray*}
<w_{n_k},\nabla f(x_{n_k})>\geq C||w_{n_k}||\times ||\nabla f(x_{n_k})||^{1+\tau}. 
\end{eqnarray*}
From this, the claim follows by the definition of $\mu (x^*)$: we can choose $\theta =\mu (1+\tau )<1$ for some choices of $\mu \geq \mu (x^*)$. 

3) The proof is similar to that in \cite{truong-etal}.

\end{proof}

\begin{proof}[Proof of Theorem \ref{Theorem4}]

The function $f$ is a real polynomial in dimension $m$ and of degree $d\geq 2$ (except the trivial case when f is a constant). Hence,  by the result in \cite{acunto-kurdyka}, we have $\mu (x^*)\leq 1-1/R(m,d)$ with $R(m,d)=d(3d-3)^{m-1}$ for all critical points $x^*$ of $f$. Hence, we can choose $\tau <1/(1-1/R(m,d))-1$, and apply part 2 of Theorem \ref{Theorem3}. 
\end{proof}

\section{Feasibility}\label{Section3} We provide some comments in the feasibility of the assumptions in the main results, as well as the implementation. 

{\bf The assumptions:}

The assumptions that $f$ is Morse or more generally has at most countably many critical points is a local condition and is satisfied generically. 

The assumption that $f$ has Lojasiewicz gradient inequality is satisfied by many interesting cost functions, and relevant to those used in Deep Learning. 

The assumption that $0<\tau \leq 1$: this only concerns the perturbation term $\delta ||\nabla f(x)||^{\tau}$ (or one can reduce to $\delta \min \{1,||\nabla f(x)||^{\tau}\}$) of the Hessian $\nabla ^2f(x)$, and hence is not a real restriction. Experimentally, we found that choosing $\tau >1$ gives similar performance. 

The assumption $\mu (x^*)(1+\tau )<1$ for all critical points $x^*$ is roughly satisfied for example when the multiplicities at the critical points are bounded from above by a finite number, and $\tau <1$ is small enough.  

For the assumption 
\begin{eqnarray*}
\liminf _{k\rightarrow\infty}\frac{\min _{i\in \Lambda _k} ||A_k.e_{i,k}||}{\max _{i\in \Lambda _k} ||A_k.e_{i,k}||}>0,
\end{eqnarray*}
in part 3 of Theorem \ref{Theorem3}: The term on the LHS is related to the "condition number" for the matrix $A_k$. Note that if at each step $k$ one chooses the vectors $e_{1,k},\ldots ,e_{m,k}$ randomly (or if one pertube them randomly), then each $e_{j,k} $ would not be completely orthogonal to the eigenvector(s) of the largest eigenvalue of $A_k$, and hence one expect that the mentioned assumption would hold. Note that actual computations on computers have inherent errors, and hence can contribute to make the process behave as random.

{\bf Implementation:} 

If one wants to compute $minsp(A_k)$, then there are two ways to go. First, one can compute the characteristic polynomial $p_k(t)$ of $A_k$, and then uses one numerical method (e.g. New Q-Newton's method or Backtracking gradient descent) to find if $p_k(t)$ has a root in the ball $\{z\in \mathbb{C}:~|z|<\kappa ||\nabla f(x_k)||^{\tau}\}$. Second, one can use numerical methods for the optimization problem $<Ax,x>^2$ for $x$ on the Riemannian manifold $\{||x||=1\}$. 

If one does not check the condition $minsp(A_k)\geq \kappa ||\nabla f(x_k)||^{\tau}$, but only checks the weaker condition $\det (A_k)\not= 0$, then experimental results (on various types of problems) are similar. An other method, which is both theoretically and practically sound, is to choose $\delta $ randomly. This way, with probability $1$ one has $minsp(A_k)$ "large enough" (while not need to check it or even the weaker condition that $A_k$ is invertible).

\end{document}